\newtheorem{theorem}{Theorem}[section]
\newtheorem{corollary}[theorem] {Corollary}
\newtheorem{definition}[theorem]{Definition}
\newtheorem{proposition}[theorem]{Proposition}
\title{\bf The Sum and Product of Chromatic Numbers of Graphs and their Line Graphs}
\author{{\bf Sunny Joseph Kalayathankal\footnote{Department of Mathematics, Kuriakose Elias College, Mannanam, Kottayam - 686561, Kerala, email:{\em sunnyjoseph2014yahoo.com}}}       ~and {\bf Susanth C \footnote{Department of Mathematics, Vidya Academy of Science \& Technology, Thalakkottukara, Thrissur - 680501, email: {\em susanth\_c@yahoo.com}}}}
\date{}
\begin{document}
\maketitle

\begin{abstract}
The bounds on the sum and product of chromatic numbers of a graph and its complement are known as Nordhaus-Gaddum inequalities. In this paper, some variations on this result is studies. First, recall their theorem, which gives bounds on the sum and the product of the chromatic number of a graph with that of its complement. we also provide a new characterization of the certain graph classes.
\end{abstract}

{\bf Keywords:} Chromatic Number of a graph, Chromatic Index of a graph, Line Graph.

\noindent \textbf{Mathematics Subject Classification 2010: 05C15}.

\section{Introduction}

For all  terms and definitions, not defined specifically in this paper, we refer to \cite{FH}. Unless mentioned otherwise, all graphs considered here are simple, finite and have no isolated vertices.
\\Many problems in extremal graph theory seek the extreme values of graph parameters
on families of graphs. The classic paper of Nordhaus and Gaddum \cite{KCA} study the extreme values of the sum (or product) of a parameter on a graph and its complement, following  solving these problems for the chromatic number on n-vertex graphs. In this paper, we study such problems for some graphs and their associated graphs.

\begin{definition}\rm{
\cite{FH} A \textit{coloring} of a graph is an assignment of colors to its vertices so that no two adjacent vertices have the same color. The set of all vertices with any one color is independent and is called a color class. An \textit{$n$-coloring} of a graph G uses n colors; it thereby partitions V into n color classes. The \textit{chromatic number} $\chi(G)$ is defined as the minimum $n$ for which $G$ has an $n$ - coloring. A graph $G$ is \textit{$n$-colorable} if $\chi(G)\leq n$ and is \textit{$n$-chromatic} if $\chi(G)=n$. }
\end{definition}

\begin{definition}\rm{
\cite{FH} An \textit{edge-coloring} or \textit{line-coloring} of a graph $G$ is an assignment of colors to its edges (lines) so that no two adjacent edges (lines) are assigned the same color. An \textit{n-edge-coloring}  of $G$ is an edge-coloring of $G$ which uses exactly $n$ colors. The \textit{edge-chromatic number} $\chi'(G)$ is the minimum $n$ for which $G$ has an $n$-edge-coloring.}
\end{definition}

Recall the following theorem, which gives bounds on the sum and the product of the chromatic number of a graph with that of its complement.  
\begin{theorem}\rm{
\cite{KCA} If $G$ is a graph with $V(G)=n$ and chromatic number $\chi(G)$ then}
\end{theorem}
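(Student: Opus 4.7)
The plan is to prove the four Nordhaus--Gaddum inequalities $2\sqrt{n} \leq \chi(G)+\chi(\bar G) \leq n+1$ and $n \leq \chi(G)\cdot\chi(\bar G) \leq \left(\frac{n+1}{2}\right)^2$ in an order that lets each bound feed into the next. I would begin with the product lower bound. Fix a proper $\chi(G)$-coloring of $G$ with color classes $V_1,\dots,V_{\chi(G)}$ and a proper $\chi(\bar G)$-coloring of $\bar G$ with classes $W_1,\dots,W_{\chi(\bar G)}$. Each $V_i$ is independent in $G$, while each $W_j$ is independent in $\bar G$ and so is a clique in $G$; hence every intersection $V_i\cap W_j$ contains at most one vertex. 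Since these intersections partition $V(G)$, the vertex count gives $n \leq \chi(G)\cdot\chi(\bar G)$. From this the sum lower bound is immediate by AM--GM: $\chi(G)+\chi(\bar G) \geq 2\sqrt{\chi(G)\chi(\bar G)} \geq 2\sqrt{n}$.

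The harder step is the sum upper bound $\chi(G)+\chi(\bar G) \leq n+1$, which I would prove by induction on $n$. The base case $n=1$ is immediate. For the inductive step I pick an arbitrary vertex $v$ and examine how the chromatic numbers change upon deletion in $G$ and in $\bar G$; in each graph the chromatic number either stays the same or drops by exactly one. If at least one does not drop---say $\chi(G-v) = \chi(G)$---then the induction hypothesis applied to $G-v$ yields $\chi(G)+\chi(\bar G) \leq \chi(G-v)+\chi(\bar G - v)+1 \leq (n-1)+1 = n+1$. If both drop, then every optimal coloring of $G-v$ uses $\chi(G)-1$ colors, each of which must appear in the neighborhood of $v$, forcing $\deg_G(v)\geq \chi(G)-1$; by a symmetric argument $\deg_{\bar G}(v) \geq \chi(\bar G)-1$. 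Adding these and using $\deg_G(v)+\deg_{\bar G}(v) = n-1$ again gives $\chi(G)+\chi(\bar G) \leq n+1$.

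Finally, the product upper bound follows from a second application of AM--GM to the sum upper bound, namely $\chi(G)\chi(\bar G) \leq \left(\frac{\chi(G)+\chi(\bar G)}{2}\right)^2 \leq \left(\frac{n+1}{2}\right)^2$. The main obstacle is the sum upper bound, specifically the case in which both chromatic numbers drop upon deletion of $v$: there one must argue carefully that any optimal coloring of the smaller graph cannot be extended, converting the coloring-extension obstruction into lower bounds on $\deg_G(v)$ and $\deg_{\bar G}(v)$ that together consume the entire $n-1$ available degree. Once that step is secured, the remaining three bounds are either elementary double counting or AM--GM manipulations of earlier inequalities.
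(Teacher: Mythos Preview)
Your argument is correct and is essentially the classical Nordhaus--Gaddum proof: the double-counting of color-class intersections for the lower product bound, AM--GM to pass to the lower sum bound, induction with vertex deletion for the upper sum bound, and AM--GM again for the upper product bound. Each step is sound, including the delicate case where both chromatic numbers drop on deleting $v$.

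However, there is nothing in the paper to compare it to. This theorem is not proved in the paper at all; it is quoted from the literature (the citation \cite{KCA}) as background, and the paper immediately moves on to its own results about $\chi(G)+\chi(L(G))$ and $\chi(G)\cdot\chi(L(G))$ for specific graph families. So your proof is a valid supplement rather than a reconstruction of anything the authors wrote.
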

\begin{equation}
2\sqrt{n}\leq \chi(G)+\chi(\bar{G})\leq n+1  
\end{equation}
\begin{equation}
n\leq\chi(G).\chi(\bar{G})\leq \frac{(n+1)^2}{4}
\end{equation}
And there is no possible improvement of any of these bounds.  In fact, much more can be said.  Let $n$ be a positive integer.  For every two positive integers $a$ and $b$, 
\begin{equation}
2\sqrt{n}\leq a+b \leq n+1
\end{equation}
\begin{equation}
n \leq ab \leq \frac{(n+1)^2}{4}
\end{equation}
There is a graph $G$ of order $n$ such that $\chi(G)=a$ and $\chi(\bar{G})=b$.
\begin{definition}{\rm
\cite{CGT} The \textit{chromatic index} (or \textit{edge chromatic number}) $\chi'(G)$ of a graph $G$ is the minimum positive integer $k$ for which $G$ is $k-$edge colorable. Furthermore, $\chi'(G)=\chi(L(G))$ for every nonempty graph $G$.}
\end{definition}
\begin{theorem}{\rm
\cite{FH} For any graph $G$, the edge-chromatic number satisfies the inequalities}
\begin{equation}
\Delta\leq\chi'\leq\Delta+1
\end{equation}
\end{theorem}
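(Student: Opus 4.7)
The plan is to prove the two inequalities separately, the lower bound being essentially immediate and the upper bound (Vizing's theorem) requiring a recoloring argument.

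For the lower bound $\Delta \leq \chi'$, I would fix a vertex $v$ of maximum degree $\Delta$. Since every pair of edges incident to $v$ shares the endpoint $v$, they are adjacent in $G$ and therefore must receive distinct colors in any proper edge-coloring. Hence at least $\Delta$ colors are required, giving $\chi'(G) \geq \Delta$.

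For the upper bound $\chi' \leq \Delta + 1$, I would argue by induction on the number of edges $|E(G)|$. The base case (one edge) is trivial. For the inductive step, pick any edge $uv_1 \in E(G)$ and apply the hypothesis to $G - uv_1$ to obtain a proper edge-coloring $c$ using at most $\Delta+1$ colors (noting that $\Delta(G - uv_1) \leq \Delta(G)$). Since every vertex has degree at most $\Delta$, at each vertex $w$ there is at least one color in $\{1,\dots,\Delta+1\}$ that is \emph{missing} at $w$. If some color is missing at both $u$ and $v_1$, extend $c$ by giving $uv_1$ that color and we are done.

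Otherwise I would build a \emph{Vizing fan} at $u$: a maximal sequence $v_1, v_2, \dots, v_k$ of distinct neighbors of $u$ such that for each $i < k$, the color of $uv_{i+1}$ is missing at $v_i$. Let $\alpha$ be a color missing at $u$ and, for each $v_i$, let $\beta_i$ be a color missing at $v_i$. If for some $i$ the color $\alpha$ is also missing at $v_i$, a shifting operation along the fan (reassigning $uv_j$ to the color of $uv_{j+1}$ for $j<i$ and coloring $uv_i$ with $\alpha$) yields a valid extension. Otherwise I would invoke the Kempe-chain argument: consider the subgraph $H$ spanned by edges colored $\alpha$ or $\beta_k$; each vertex has degree at most $2$ in $H$, so components are paths or even cycles. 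Swapping the two colors on the component containing $u$ (or $v_k$, chosen carefully so that the swap does not damage the fan) frees a color that allows the shifted fan argument to succeed. The main obstacle will be bookkeeping: verifying that after the Kempe swap the fan structure and the sets of missing colors behave as needed, so that the extension step is actually legal. Once that case analysis is organized, the inductive step closes and the bound $\chi'(G) \leq \Delta+1$ follows.
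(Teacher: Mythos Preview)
The paper does not prove this theorem at all: it is quoted from Harary \cite{FH} as a background result (Vizing's theorem), so there is no proof in the paper to compare against. Your outline is the standard proof, and both inequalities are handled correctly in spirit.

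Your lower bound is complete as written. Your upper bound is an accurate sketch of the Vizing fan argument; the only place it is not yet a proof is exactly the one you flag yourself, namely the Kempe-chain case. To close it you must be more explicit: let $\beta = \beta_k$ be the color missing at $v_k$, and consider the $\alpha$--$\beta$ Kempe path $P$ starting at $v_k$. One shows that $P$ cannot end at $u$ (since $\alpha$ is missing at $u$, the path at $u$ would have to start with a $\beta$-edge, but $\beta$ appears on some $uv_j$ only if $\beta = \beta_{j-1}$ for some $j \le k$, and maximality of the fan then forces a separate shift). After swapping along $P$, the color $\alpha$ is missing at $v_k$ as well as at $u$, and one must verify that the intermediate fan edges $uv_2,\dots,uv_k$ were untouched by the swap (they carry neither color $\alpha$ nor $\beta$ except possibly one, which is handled by truncating the fan before the swap). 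Once you pin down which vertex to swap from and why the fan survives, the shift-then-color step goes through. Your plan is correct; it just needs that case analysis written out rather than gestured at.
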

\begin{theorem}{\rm
\cite{CGT} (Konig's Theorem) If $G$ is a nonempty bipartite graph, then $\chi'(G)=\Delta(G)$.}
\end{theorem}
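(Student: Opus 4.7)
The plan is to prove the upper bound $\chi'(G) \le \Delta(G)$; the matching lower bound $\chi'(G) \ge \Delta(G)$ is immediate since the $\Delta(G)$ edges incident to a vertex of maximum degree must all receive distinct colors. I would argue by induction on the number of edges $m = |E(G)|$. The base case $m = 1$ is trivial. For the inductive step, pick an arbitrary edge $uv$, delete it, and apply the induction hypothesis to the bipartite graph $G - uv$ to obtain a proper edge coloring with at most $\Delta(G)$ colors.

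Since $\deg_G(u), \deg_G(v) \le \Delta(G)$, in $G - uv$ each of $u$ and $v$ is incident to at most $\Delta(G) - 1$ edges, so at least one color in $\{1,\dots,\Delta(G)\}$ is free at $u$ and at least one is free at $v$. If some color is free at both endpoints, assign it to $uv$ and we are done. Otherwise fix a color $a$ free at $u$ but used at $v$, and a color $b$ free at $v$ but used at $u$. Let $H$ be the subgraph of $G - uv$ consisting of all edges colored $a$ or $b$. Because any vertex meets at most one edge of each color, $\Delta(H) \le 2$, so $H$ is a disjoint union of paths and cycles; one checks directly that $u$ and $v$ each have degree exactly one in $H$, with the $H$-edge at $v$ colored $a$ and the $H$-edge at $u$ colored $b$. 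Let $P$ be the component of $H$ containing $v$.

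The key step, and the only place where bipartiteness enters, is to show that $P$ does not pass through $u$. Suppose for contradiction that $P$ is a $v$--$u$ path. Its edges alternate between colors $a$ and $b$, begin with $a$ at the $v$-end, and end with $b$ at the $u$-end, so the number of edges of $P$ is even. But in a bipartite graph any $v$--$u$ path has odd length, since $uv$ is an edge and so $u,v$ lie in opposite parts of the bipartition. This contradiction shows $P$ avoids $u$.

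To finish, swap colors $a$ and $b$ along $P$; this yields a proper edge coloring of $G - uv$ (the swap is confined to a single component of $H$, so no conflicts are created) in which color $a$ is now free at $v$ and still free at $u$. We may extend the coloring to $uv$ by assigning it color $a$, completing the induction. I expect the main obstacle, once the framework is in place, to be the bookkeeping around which color is free at which endpoint and the verification that the swap preserves properness; both points reduce cleanly to the observation that $\Delta(H)\le 2$, and the parity computation is where bipartiteness does the essential work.
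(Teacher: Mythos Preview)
Your argument is correct: this is the standard Kempe-chain proof of K\"onig's theorem, and the parity check in the key step is exactly where bipartiteness is used. One very minor point of bookkeeping worth making explicit is that after swapping colors along $P$, the color $a$ remains free at $u$ precisely because $u \notin P$; you state this but it is the crux of why the swap helps.

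However, there is nothing to compare against: the paper does not supply its own proof of this theorem. It is quoted as a known result from \cite{CGT} (Chartrand and Zhang, \emph{Chromatic Graph Theory}) and used as a black box, for instance in the proof of Proposition~2.3 to deduce $\chi'(K_{m,n}) = \max(m,n)$. Your write-up therefore goes well beyond what the paper itself contains; if you want to match the paper's treatment, simply citing the result suffices.
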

\begin{theorem}{\rm
\cite{CH} Let $G=K_n$, the complete graph on $n$ vertices, $n\geq2$. Then}
\[\chi'(G)=
\left\{
\begin{array}{ll}
\Delta(G)& \text{if $n$ is even} \\
\Delta(G)+1 & \text{if $n$ is odd}
\end{array}\right.\]
\end{theorem}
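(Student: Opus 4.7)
The plan is to treat the two parities separately, using Vizing's bound (Theorem~1.3 in the excerpt) as the common starting point. Since $\Delta(K_n)=n-1$, we immediately get $\chi'(K_n)\in\{n-1,n\}$, so in each case we only need to locate the exact value.

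For $n$ even, I would exhibit an explicit proper edge-coloring with $n-1$ colors, which is the classical ``rotation'' construction (equivalently, a $1$-factorization of $K_n$). Label one vertex $\infty$ and place the remaining $n-1$ vertices around the cycle $\mathbb{Z}_{n-1}$. For each color $i\in\{0,1,\dots,n-2\}$ let the $i$th color class consist of the edge $\{\infty,i\}$ together with all edges $\{i+k,\, i-k\}$ for $k=1,\dots,(n-2)/2$, reducing indices modulo $n-1$. I would check: (a) within a single class the edges are pairwise non-adjacent (standard parity check modulo $n-1$), (b) each of the $\binom{n}{2}$ edges of $K_n$ appears in exactly one class, so every color class is a perfect matching and the coloring is proper, and (c) exactly $n-1$ colors are used. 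Combining this with $\chi'(K_n)\geq\Delta=n-1$ finishes the even case.

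For $n$ odd, the upper bound $\chi'(K_n)\leq n$ comes free by embedding $K_n$ inside $K_{n+1}$: since $n+1$ is even, the previous case gives a proper $n$-edge-coloring of $K_{n+1}$, and deleting any vertex restricts this to a proper $n$-edge-coloring of $K_n$. The heart of the argument is the matching lower bound $\chi'(K_n)\geq n$. Every color class in a proper edge-coloring is a matching, and a matching in $K_n$ with $n$ odd misses at least one vertex, so has at most $(n-1)/2$ edges. Therefore
\begin{equation*}
\chi'(K_n)\;\geq\;\frac{|E(K_n)|}{(n-1)/2}\;=\;\frac{n(n-1)/2}{(n-1)/2}\;=\;n,
\end{equation*}
which together with Vizing gives $\chi'(K_n)=n$.

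The only genuinely delicate step is the $1$-factorization for even $n$; the rest is bookkeeping and a pigeonhole count. I expect the main obstacle to be verifying cleanly that the proposed rotation scheme actually partitions $E(K_n)$ into $n-1$ perfect matchings, in particular that for each fixed color $i$ the map $k\mapsto\{i+k,i-k\}$ is injective on $\{1,\dots,(n-2)/2\}$ and that no vertex other than $\infty$ is left uncovered — both of which rely on $n-1$ being odd, i.e., on $n$ being even, so the parity hypothesis is used exactly where it must be.
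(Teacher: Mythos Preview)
Your argument is correct and is precisely the classical textbook proof: a rotation $1$-factorization handles even $n$, and for odd $n$ the counting bound $\chi'(K_n)\geq |E(K_n)|/\lfloor n/2\rfloor=n$ together with the embedding into $K_{n+1}$ pins down the value. Nothing in it is gapped.

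However, there is nothing to compare it to: in the paper this theorem is not proved at all. It is quoted as a known result from Clark and Holton~\cite{CH} and used as a black box in the proof of Proposition~2.2. So your proposal does not duplicate or diverge from the paper's proof --- it supplies a proof where the paper gives none. If the intent is to make the exposition self-contained, what you wrote is exactly the right content; if the intent is to mirror the paper, you should simply cite the result and move on.
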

We denote the chromatic number of a graph $G$ is denoted by $\chi(G)$ and the complement of G is denoted by $\bar{G}$.
\\This work is motivated by the inspiring talk given by Dr. J Paulraj Joseph, Department of Mathematics, Manonmaniam Sundaranar University, Tirunelveli - 627012, TamilNadu, Titled \textbf{Bounds on sum of graph parameters - A survey}, at the National Conference on Emerging Trends in Graph Connections (NCETGC-2014), on January 8-10, 2014, at the Department of Mathematics, University of Kerala, Kariavattom, Kerala.
\section{New Results}
\begin{definition}{\rm
\cite{DBW} The \textit{line graph} of a graph $G$, written $L(G)$, is the graph whose vertices are the edges of $G$, with $ef\in{E(L(G))}$ when $e=uv$ and $f=vw$ in G.}
\end{definition}
With the above background, we now prove the following.
\begin{proposition}
For a Complete graph $K_n$, $n\geq 2$,
\[\chi(K_n)+\chi(L(K_n))=
\left\{
\begin{array}{ll}
2n-1 & \text{if $n$ is even} \\
2n & \text{if $n$ is odd}
\end{array}\right.\]
\[\chi(K_n).\chi(L(K_n))=
\left\{
\begin{array}{ll}
n(n-1) & \text{if $n$ is even} \\
n^2 & \text{if $n$ is odd}
\end{array}\right.\]
\end{proposition}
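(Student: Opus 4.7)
The plan is to reduce everything to two already-stated facts and then perform the trivial arithmetic. First I would observe that since every pair of vertices in $K_n$ is adjacent, no two vertices can share a color, so $\chi(K_n)=n$; this also matches $\Delta(K_n)+1=n$.

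Next, I would invoke the identity $\chi'(G)=\chi(L(G))$ from the definition of the chromatic index given above, applied to $G=K_n$, to replace $\chi(L(K_n))$ by $\chi'(K_n)$. Then the theorem quoted from \cite{CH} gives
\[
\chi'(K_n)=\begin{cases}\Delta(K_n)=n-1 & \text{if $n$ is even,}\\ \Delta(K_n)+1=n & \text{if $n$ is odd.}\end{cases}
\]

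The proof then splits into the two parity cases. If $n$ is even, I would substitute $\chi(K_n)=n$ and $\chi(L(K_n))=n-1$ into the sum to obtain $2n-1$ and into the product to obtain $n(n-1)$. If $n$ is odd, I would substitute $\chi(K_n)=n$ and $\chi(L(K_n))=n$ to obtain the sum $2n$ and the product $n^2$.

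There is no real obstacle here; the only thing to be careful about is to explicitly justify $\chi(K_n)=n$ (cliques force distinct colors) and to cite the edge-chromatic number theorem for $K_n$ correctly so that the parity split is transparent. The statement is then a one-line consequence of these two inputs.
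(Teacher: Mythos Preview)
Your proposal is correct and follows essentially the same approach as the paper's own proof: identify $\chi(K_n)=n$, use $\chi(L(K_n))=\chi'(K_n)$, invoke the cited theorem on $\chi'(K_n)$ for even and odd $n$, and then do the arithmetic in each parity case. The only difference is cosmetic---you give a one-line justification for $\chi(K_n)=n$ where the paper simply states it.
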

\begin{proof}
We know that $\chi(K_n)= n$ for all positive integer $n$.  Let $L(K_n)$ denotes the line graph of $K_n$.  Then, $K_n$ is $(n-1)$ regular and by definition, $\chi(L(K_n))= \chi'(K_n)$.  By theorem \cite{CH},
\[\chi'(K_n)=
\left\{
\begin{array}{ll}
\Delta=n-1 & \text{if $n$ is even} \\
\Delta + 1=n & \text{if $n$ is odd}
\end{array}\right.\]
ie,
\[\chi(L(K_n))= \chi'(K_n)=
\left\{
\begin{array}{ll}
\Delta=n-1 & \text{if $n$ is even} \\
\Delta + 1=n & \text{if $n$ is odd}
\end{array}\right.\]
Therefore
\[\chi(K_n)+\chi(L(K_n))=
\left\{
\begin{array}{ll}
n+n-1=2n-1 & \text{if $n$ is even} \\
n+n=2n & \text{if $n$ is odd}
\end{array}\right.\]
Similarly,
\[\chi(K_n).\chi(L(K_n))=
\left\{
\begin{array}{ll}
n(n-1) & \text{if $n$ is even} \\
n.n=n^2 & \text{if $n$ is odd}
\end{array}\right.\]
\end{proof}
\begin{proposition}
For a Complete bipartite graph $K_{m,n}$, $m,n\geq 0$,
\\
\centerline {$\chi(K_{m,n})+\chi(L(K_{m,n}))= 2+max~(m,n)$  and}
\\
\centerline {$\chi(K_{m,n}).\chi(L(K_{m,n}))= 
2~max~(m,n)$}
\end{proposition}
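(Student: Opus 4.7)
The plan is to reduce both identities to two elementary facts already collected in the excerpt: the definition of the chromatic number of a line graph and König's theorem for the chromatic index of bipartite graphs. So there really is no heavy machinery to deploy; the proof is essentially a two-line unwinding once the right quantities are identified.

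First I would observe that $K_{m,n}$ is, by construction, a bipartite graph (with the obvious $2$-coloring given by the bipartition), and that it has at least one edge whenever $m,n\geq 1$, so $\chi(K_{m,n})=2$. Next, I would invoke the identity $\chi(L(G))=\chi'(G)$ from the definition of the chromatic index to rewrite $\chi(L(K_{m,n}))$ as $\chi'(K_{m,n})$. Since $K_{m,n}$ is bipartite and nonempty, König's theorem (Theorem 1.6 in the excerpt) gives $\chi'(K_{m,n})=\Delta(K_{m,n})$. Finally, a vertex in the part of size $m$ has degree $n$ and a vertex in the part of size $n$ has degree $m$, so $\Delta(K_{m,n})=\max(m,n)$.

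Substituting these two values, $\chi(K_{m,n})=2$ and $\chi(L(K_{m,n}))=\max(m,n)$, into the sum and product yields $2+\max(m,n)$ and $2\max(m,n)$, respectively, which is exactly the claim.

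The only mild subtlety, and hence the ``main obstacle,'' is the degenerate range $m,n\geq 0$ stated in the proposition: if $m=0$ or $n=0$ then $K_{m,n}$ has no edges, so $L(K_{m,n})$ is empty and $\chi'$ is not quite covered by König's theorem as stated. I would therefore either restrict to $m,n\geq 1$ (consistent with the standing assumption in the introduction that graphs have no isolated vertices, which forces both parts to be nonempty) or else handle the edgeless case by a short separate remark. With that caveat addressed, the argument above is complete.
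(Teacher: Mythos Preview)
Your argument is correct and mirrors the paper's own proof almost exactly: both compute $\chi(K_{m,n})=2$, invoke $\chi(L(K_{m,n}))=\chi'(K_{m,n})$, and apply K\"onig's theorem to get $\chi'(K_{m,n})=\Delta(K_{m,n})=\max(m,n)$. Your added remark about the degenerate cases $m=0$ or $n=0$ is a nice touch that the paper omits.
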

\begin{proof}
We know that $\chi(K_{m,n})= 2$ for all positive integer $m,n$.  Let $L(K_{m,n})$ denotes the line graph of $K_{m,n}$.  Then, by definition, $\chi(L(K_{m,n}))= \chi'(K_{m,n})$. \\ 
Therefore By theorem \cite{CGT}, $\chi(L(K_{m,n}))= \chi'(K_{m,n})=max~(m,n)$.
\\\\Then $\chi(K_{m,n})+\chi(L(K_{m,n}))= 2+max~(m,n)$ and\\
$\chi(K_{m,n}).\chi(L(K_{m,n}))= 2~max~(m,n)$
\end{proof}
\begin{corollary}
For a star graph $K_{1,n}$, 
\\
\centerline {$\chi(K_{1,n})+\chi(L(K_{1,n}))=n+2$   and}
\\
\centerline {$\chi(K_{1,n}).\chi(L(K_{1,n}))=2n$}
\end{corollary}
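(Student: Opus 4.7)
The plan is to derive this as the direct specialization of the preceding Proposition with $m=1$. Since the star graph $K_{1,n}$ coincides with the complete bipartite graph $K_{m,n}$ for $m=1$, and since $\max(1,n) = n$ whenever $n \geq 1$, one can simply substitute $m=1$ into the two identities $\chi(K_{m,n})+\chi(L(K_{m,n})) = 2+\max(m,n)$ and $\chi(K_{m,n})\cdot\chi(L(K_{m,n})) = 2\,\max(m,n)$ already established. This yields $n+2$ and $2n$ respectively, which is exactly the claim.

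For a self-contained verification, I would supply two one-line observations. First, $K_{1,n}$ is bipartite with both parts nonempty, hence has at least one edge, so it is $2$-chromatic: $\chi(K_{1,n})=2$. Second, every edge of $K_{1,n}$ is incident with the unique center vertex, so any two edges share an endpoint; consequently every two vertices of $L(K_{1,n})$ are adjacent and $L(K_{1,n}) \cong K_n$, giving $\chi(L(K_{1,n})) = \chi(K_n) = n$. Combining these two values yields the sum $2+n$ and the product $2n$, matching the previous step.

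There is essentially no obstacle here; the statement is a corollary in the strict sense. The only point worth noting is boundary behaviour: for $n=1$ the graph $K_{1,1}$ is a single edge with $L(K_{1,1}) = K_1$, giving sum $2+1=3$ and product $2\cdot 1 = 2$, in agreement with the formulas. Hence the claimed identities hold for every $n \geq 1$.
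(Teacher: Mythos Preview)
Your proof is correct and your self-contained verification is exactly the paper's argument: the paper observes that any two edges of $K_{1,n}$ are adjacent, so $L(K_{1,n})\cong K_n$, whence $\chi(K_{1,n})=2$ and $\chi(L(K_{1,n}))=n$, giving the stated sum and product. Your additional derivation by specializing the preceding Proposition at $m=1$ is a natural complement (and indeed the more corollary-like route), but it is not what the paper actually writes.
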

\begin{proof}
Since any two edges of a star graph  $K_{1,n}$ are adjacent each other, then its line graph is a complete graph with $n$ vertices.  We know $\chi(K_{1,n})=2$ and $\chi(L(K_{1,n}))=n$.
\\
Therefore $\chi(K_{1,n})+\chi(L(K_{1,n}))= n+2$ and
\\
$\chi(K_{1,n}).\chi(L(K_{1,n}))=2n$.
\end{proof}

A \textit{bistar graph}  $(B_{m,n})$ is a graph obtained by attaching $m$ pendent edges to one end point and $n$ pendent edges to the other end point of $K_2$.
\\
The following result establishes the sum and product of chromatic numbers of a bistar graph and its line graph.
\begin{proposition}
For a bistar graph $B_{m,n}$,
\\
\centerline {$\chi(B_{m,n})+\chi(L(B_{m,n}))= 2+max~(m,n)$  and}
\\
\centerline {$\chi(B_{m,n}).\chi(L(B_{m,n}))= 2~ max~(m,n)$}
\end{proposition}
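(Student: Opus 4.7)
The plan is to mimic the proof already given for $K_{m,n}$, since $B_{m,n}$ is a tree and hence bipartite, so Konig's Theorem applies. I would proceed in three short steps: first identify $\chi(B_{m,n})$, then compute $\chi'(B_{m,n})$ via the maximum degree, and finally translate the result to $\chi(L(B_{m,n}))$ through the identity $\chi(L(G))=\chi'(G)$ recalled earlier.

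For the first step, $B_{m,n}$ is obtained by attaching pendent edges to the endpoints of $K_2$, so it is acyclic and therefore a tree. Every tree on more than one vertex is bipartite, which immediately yields $\chi(B_{m,n})=2$.

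For the remaining two steps, because $B_{m,n}$ is bipartite, Konig's Theorem gives $\chi'(B_{m,n})=\Delta(B_{m,n})$. Let $u$ and $v$ denote the two endpoints of the central $K_2$; every pendent leaf has degree $1$, so the maximum degree is attained at $u$ or $v$ and is determined entirely by the number of pendent edges attached to them. Reading this off yields $\Delta(B_{m,n})=\max(m,n)$, and combined with $\chi(L(G))=\chi'(G)$ we obtain $\chi(L(B_{m,n}))=\max(m,n)$.

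Adding and multiplying with $\chi(B_{m,n})=2$ then gives the two claimed identities $\chi(B_{m,n})+\chi(L(B_{m,n}))=2+\max(m,n)$ and $\chi(B_{m,n})\cdot\chi(L(B_{m,n}))=2\max(m,n)$. The only delicate point is the maximum-degree bookkeeping in step two; once that is pinned down, everything else is essentially a verbatim reuse of the argument given for $K_{m,n}$.
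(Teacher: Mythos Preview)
Your route via K\"onig's theorem is not the one the paper takes: the paper argues directly that $L(B_{m,n})$ is the one-point union of two complete graphs and reads off its chromatic number from that description, without invoking $\chi'=\Delta$. Your approach is arguably cleaner and parallels the $K_{m,n}$ proof more closely.

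That said, there is a genuine slip in your degree count. The vertex $u$ is incident to its $m$ pendent edges \emph{and} to the central edge $uv$, so $\deg(u)=m+1$; likewise $\deg(v)=n+1$. Hence $\Delta(B_{m,n})=\max(m,n)+1$, and K\"onig's theorem then gives $\chi'(B_{m,n})=\max(m,n)+1$, not $\max(m,n)$. The paper's proof contains the matching oversight: it asserts that $L(B_{m,n})$ is the one-point union of $K_m$ and $K_n$, but the vertex of $L(B_{m,n})$ corresponding to the central edge $uv$ is adjacent to all $m$ pendent-edge vertices on one side and all $n$ on the other, so the correct picture is a one-point union of $K_{m+1}$ and $K_{n+1}$ glued at that vertex. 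Under the paper's own definition of $B_{m,n}$, then, the proposition is off by one, and your argument reproduces rather than repairs the discrepancy; the ``delicate point'' you flagged is exactly where it goes wrong.
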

\begin{proof}
Let $u$, $v$ be two vertices of $K_2$.  Let $m$ edges be attached to $u$ and $n$ edges be attached to $v$.  Since all $m$ edges at $u$ are adjacent to each other and all $n$ edges at $v$ are adjacent to each other, its line graph is the one point union of 2 complete graphs $K_m$ and $K_n$. \\ Then
\[\chi(L(B_{m,n}))=
\left\{
\begin{array}{ll}
m & \text{if $m>n$} \\
n & \text{otherwise}
\end{array}\right.\]
Therefore
\\
\centerline {$\chi(B_{m,n})+\chi(L(B_{m,n}))= 2+max~(m,n)$  and}
\\
\centerline {$\chi(B_{m,n}).\chi(L(B_{m,n}))= 2~  max~(m,n)$}
\end{proof}
\begin{proposition}
Let $G$ be a bipartite graph with a bipartition $(X,Y)$ with $|X|=m$ and $|Y|=n$, then
$4\leq\chi(G)+\chi(L(G)\leq2+max~(m,n)$ and\\
$4\leq\chi(G).\chi(L(G)\leq2~max~(m,n)$
\end{proposition}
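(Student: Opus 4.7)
My plan is to build the entire proposition out of three facts already available in the excerpt: (i) any bipartite graph with at least one edge has $\chi(G)=2$, (ii) $\chi(L(G))=\chi'(G)$ by the definition of the line graph together with the remark in the definition of chromatic index, and (iii) König's theorem, which identifies $\chi'(G)$ with $\Delta(G)$ whenever $G$ is bipartite. Once these are in place the statement reduces to bounding $\Delta(G)$ from below by $2$ and from above by $\max(m,n)$.

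The upper bound is the short step. Every vertex of $X$ has its neighbourhood inside $Y$ and therefore has degree at most $|Y|=n$, while every vertex of $Y$ has degree at most $|X|=m$; consequently $\Delta(G)\le\max(m,n)$. Feeding this into König's theorem yields $\chi(L(G))=\Delta(G)\le\max(m,n)$, and then
\[
\chi(G)+\chi(L(G))=2+\Delta(G)\le 2+\max(m,n),
\qquad
\chi(G)\cdot\chi(L(G))=2\Delta(G)\le 2\max(m,n),
\]
with equality attained by $K_{m,n}$ (matching Proposition~2.3).

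For the lower bound the plan is to observe that, under the standing assumption that $G$ has no isolated vertices and carries at least two incident edges, one has $\chi(G)\ge 2$ (since $G$ contains an edge) and $\Delta(G)\ge 2$ (since some vertex is incident with two edges whose corresponding vertices in $L(G)$ are adjacent). Combining these gives $\chi(G)+\chi(L(G))\ge 2+2=4$ and $\chi(G)\cdot\chi(L(G))\ge 2\cdot 2=4$, which is what is claimed.

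The only real obstacle is a boundary case: if $G$ is a perfect matching then $L(G)$ has no edges and $\chi(L(G))=1$, so the lower bound of $4$ fails and one gets $3$ instead. I would therefore either tighten the hypothesis to require $\Delta(G)\ge 2$ (equivalently, that $G$ is not a disjoint union of single edges), or else state the lower bound as the trivial $2+1=3$ and reserve $4$ for the non-matching case. Apart from this caveat, all the ingredients are standard consequences of König's theorem and the definition of the line graph.
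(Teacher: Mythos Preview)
Your argument is correct and in fact more complete than the paper's own proof. The paper establishes the lower bound by exhibiting a single extremal example: it observes that the path $P_{m+n-1}$ is a connected bipartite graph with bipartition sizes $m$ and $n$, that its line graph is $P_{m+n-2}$, and that both have chromatic number $2$, so the value $4$ is attained. It then simply writes down the two inequalities, implicitly relying on Proposition~2.3 (the $K_{m,n}$ case) for the upper bound, but it never argues that \emph{every} bipartite $G$ with these bipartition sizes satisfies the bounds.

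Your approach is genuinely different and more rigorous: by invoking K\"onig's theorem you reduce both inequalities to the single chain $2\le\Delta(G)\le\max(m,n)$, and you prove the upper bound directly from the bipartition structure rather than by pointing at $K_{m,n}$. This actually proves the proposition for all $G$, not just for the two extremal graphs. Your identification of the perfect-matching boundary case is also a real observation that the paper overlooks: its choice of the path $P_{m+n-1}$ tacitly assumes $m+n\ge 3$, so that $\Delta\ge 2$, but the stated hypotheses do not exclude $G$ being a disjoint union of edges, where the sum drops to $3$.
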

\begin{proof}
The minimal connected bipartite graph of $m,n$ vertices will be $P_{m+n-1}$ and that of its line graph is $P_{m+n-2}$. Chromatic number of $G$ and $L(G)$ is 2.\\
\\Therefore $\chi(G)+\chi(L(G))= 4$ and $\chi(G).\chi(L(G))= 4$ then\\
\begin{equation}
4\leq\chi(G)+\chi(L(G))\leq2+max~(m,n)
\end{equation}
\begin{equation}
4\leq\chi(G).\chi(L(G))\leq2~max~(m,n)
\end{equation}
\end{proof}
\begin{definition}\rm{
\cite{FH} For $n\geq4$, a \textit{wheel graph} $W_n$ is defined to be the graph $K_1+ C_{n-1}$.}
\end{definition}
\begin{theorem}
The chromatic index of a wheel graph $W_n$ with $n$ vertices is $n-1$. 
\end{theorem}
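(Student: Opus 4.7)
The plan is to establish the equality by sandwiching $\chi'(W_n)$ between matching lower and upper bounds, where the lower bound is immediate and the upper bound requires an explicit construction.

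For the lower bound, I would first compute $\Delta(W_n)$. Writing $W_n = K_1 + C_{n-1}$, label the hub as $v_0$ and the cycle vertices as $v_1, v_2, \ldots, v_{n-1}$. The hub has degree $n-1$, while each cycle vertex has degree $3$; since $n \geq 4$ implies $n-1 \geq 3$, we get $\Delta(W_n) = n-1$. Because every edge-coloring must assign distinct colors to the $n-1$ edges incident to the hub, we obtain $\chi'(W_n) \geq n-1$. Combined with the general bound $\chi' \leq \Delta + 1$ from the edge-chromatic inequality quoted earlier, it remains only to rule out the value $n$.

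For the upper bound, I would exhibit a proper edge-coloring with colors from $\{1,2,\ldots,n-1\}$, interpreted cyclically. Assign the spoke $v_0 v_i$ the color $i$ for $1 \le i \le n-1$, and assign each cycle edge $v_i v_{i+1}$ the color $i+2 \pmod{n-1}$ (with residues taken in $\{1, \ldots, n-1\}$, and indices on the cycle read modulo $n-1$). The spokes then exhaust the $n-1$ colors at the hub, and at each cycle vertex $v_i$ the three incident edges receive the three consecutive colors $i$, $i+1$, $i+2$ (mod $n-1$), which are pairwise distinct precisely because $n-1 \geq 3$.

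The only mildly delicate step will be verifying the construction at the wrap-around edge $v_{n-1}v_1$ and confirming that consecutive colors remain distinct modulo $n-1$; this is where an odd cycle (the case $n$ even, so that $C_{n-1}$ is odd) would create trouble if one tried to reuse only two colors on the rim, but here the abundance of $n-1 \geq 3$ colors neutralizes any parity obstruction. Once the construction is verified, we conclude $\chi'(W_n) \leq n-1$, and together with the lower bound this gives $\chi'(W_n) = n - 1$, as claimed.
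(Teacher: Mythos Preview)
Your argument is correct and follows essentially the same strategy as the paper: obtain the lower bound from $\Delta(W_n)=n-1$ at the hub, then reuse the $n-1$ spoke colors on the rim edges to achieve the upper bound. The only difference is one of precision: you give an explicit cyclic formula $v_iv_{i+1}\mapsto i+2\pmod{n-1}$ and verify properness at each rim vertex, whereas the paper merely asserts that each rim edge $q_j$ can borrow the color of some non-adjacent spoke $e_i$ without checking that these choices are globally consistent---so your write-up is in fact the more complete of the two.
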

\begin{proof}
A wheel graph $W_n$ with $n$ vertices is $K_1+ C_{n-1}$.  Suppose $K_1$ lies inside the circle $C_{n-1}$.  Let $e_1,e_2,e_3,...,e_{n-1}$ be the edges incident with the vertex $K_1$ and we need $n-1$ colors to color this $n-1$ edges.  Let $u_1,u_2,u_3,...,u_{n-1}$ be the end vertices of $e_1,e_2,e_3,...,e_{n-1}$, which form the cycle $C_{n-1}$.  Then, there exists $q_1,q_2,q_3,...,q_{n-1}$ edges incident to $u_1,u_2,u_3,...,u_{n-1}$.  For any edge $q_j$ in the cycle $C_{n-1}$, there exists an edge $e_i$ which is not adjacent to $q_j$.  Therefore $e_i$ and $q_j$ can have the same color.  That is using the same set of $n-1$ colors, we can color the edges $q_1,q_2,q_3,...,q_{n-1}$.  That means we can color the edges of a wheel graph $W_n$ with $n-1$ colors or the chromatic index of $W_n$ is $n-1$.
\end{proof}
\begin{proposition}
For a wheel graph $W_n$ on $n$ vertices and $2(n-1)$ edges, $n\geq 4$,
\[\chi(W_n)+\chi(L(W_n))=
\left\{
\begin{array}{ll}
n+3 & \text{if $n$ is even} \\
n+2 & \text{if $n$ is odd}
\end{array}\right.\]
\[\chi(W_n).\chi(L(W_n))=
\left\{
\begin{array}{ll}
4(n-1) & \text{if $n$ is even} \\
3(n-1) & \text{if $n$ is odd}
\end{array}\right.\]
\end{proposition}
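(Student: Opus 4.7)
The plan is to compute $\chi(W_n)$ and $\chi(L(W_n))$ separately and then combine them. For the line graph part, I would appeal directly to the theorem just proved, which gives $\chi(L(W_n))=\chi'(W_n)=n-1$, independent of the parity of $n$. So all the parity dependence in the final formula must come from $\chi(W_n)$ itself.

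To compute $\chi(W_n)=\chi(K_1+C_{n-1})$, I would observe that the apex vertex is adjacent to every vertex of the rim cycle $C_{n-1}$, so in any proper coloring it must receive a color distinct from every color used on the cycle. Hence $\chi(W_n)=1+\chi(C_{n-1})$. I would then split into two cases based on the parity of $n-1$: if $n$ is odd then $n-1$ is even and $\chi(C_{n-1})=2$, giving $\chi(W_n)=3$; if $n$ is even then $n-1$ is odd and $\chi(C_{n-1})=3$, giving $\chi(W_n)=4$. These two facts about cycle colorings are standard and may be cited rather than re-derived.

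Combining the two pieces, for $n$ even I get $\chi(W_n)+\chi(L(W_n))=4+(n-1)=n+3$ and $\chi(W_n)\cdot\chi(L(W_n))=4(n-1)$; for $n$ odd I get $\chi(W_n)+\chi(L(W_n))=3+(n-1)=n+2$ and $\chi(W_n)\cdot\chi(L(W_n))=3(n-1)$, matching the claimed formulas.

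There is no real obstacle here, since both ingredients are essentially quoted: the chromatic index of $W_n$ was handled in the preceding theorem, and the chromatic number of a cycle is classical. The only item that deserves a sentence of justification is the identity $\chi(W_n)=1+\chi(C_{n-1})$, which follows from the join structure $W_n=K_1+C_{n-1}$ together with the fact that in any join $G_1+G_2$ the chromatic number is additive, $\chi(G_1+G_2)=\chi(G_1)+\chi(G_2)$. The rest is bookkeeping.
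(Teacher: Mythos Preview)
Your proposal is correct and follows essentially the same approach as the paper: quote $\chi(L(W_n))=\chi'(W_n)=n-1$ from the preceding theorem, record $\chi(W_n)\in\{3,4\}$ according to parity, and add/multiply. The only difference is that the paper simply asserts the value of $\chi(W_n)$ as known, whereas you supply the extra sentence deriving it from $\chi(K_1+C_{n-1})=1+\chi(C_{n-1})$; this is a minor expository addition, not a different argument.
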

\begin{proof}
We know that 
\[\chi(W_n)=
\left\{
\begin{array}{ll}
4 & \text{if $n$ is even} \\
3 & \text{if $n$ is odd}
\end{array}\right.\] 
for all positive integer $n\geq4 $.  Let $L(W_n)$ denotes the line graph of $W_n$.  Then, 
$\chi(L(W_n))= \chi'(W_n) = (n-1)$
Therefore
\[\chi(W_n)+\chi(L(W_n))=
\left\{
\begin{array}{ll}
4+(n-1)=n+3 & \text{if $n$ is even} \\
3+(n-1)=n+2 & \text{if $n$ is odd}
\end{array}\right.\]
Similarly,
\[\chi(W_n).\chi(L(W_n))=
\left\{
\begin{array}{ll}
4(n-1) & \text{if $n$ is even} \\
3(n-1) & \text{if $n$ is odd}
\end{array}\right.\]
\end{proof}
\begin{definition}\rm{
\cite{JAG} \textit{Helm graphs} are graphs obtained from a wheel by attaching one pendant edge to each vertex of the cycle.}
\end{definition}
\begin{theorem}\rm{
The chromatic index of a helm graph $H_n$ with $2n+1$ vertices and $3n$ edges is $n$.}
\end{theorem}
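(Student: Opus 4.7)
The plan is to prove the equality by establishing $\chi'(H_n) \geq n$ and $\chi'(H_n) \leq n$ separately. For the lower bound, observe that the centre vertex of the underlying wheel is joined to all $n$ cycle vertices, so it has degree $n$, while each cycle vertex has degree $4$ and each pendant has degree $1$. Hence $\Delta(H_n) = n$ for $n \geq 4$, and the inequality $\chi'(G) \geq \Delta(G)$ recalled in the Vizing-type bound of the preliminaries forces $\chi'(H_n) \geq n$.

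For the upper bound I would exhibit an explicit proper $n$-edge-colouring. Label the centre by $v_0$, the cycle vertices by $v_1, \ldots, v_n$ with indices read modulo $n$, and the pendant attached to $v_i$ by $w_i$. Colour the spoke $v_0 v_i$ by $i$, the cycle edge $v_i v_{i+1}$ by the representative of $i+2 \pmod{n}$ chosen inside $\{1, \ldots, n\}$, and the pendant edge $v_i w_i$ by any element of the set $\{1, \ldots, n\} \setminus \{i, i+1, i+2\}$, which is nonempty precisely because $n \geq 4$.

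Verification proceeds vertex-by-vertex. At $v_0$ the $n$ spokes carry the $n$ distinct colours $1, \ldots, n$; at each cycle vertex $v_i$ the four incident edges carry the spoke colour $i$, the two cycle-edge colours $i+1$ and $i+2$ (obtained by feeding $i-1$ and $i$ into the formula), and a pendant colour chosen to avoid these three; the pendant vertices $w_i$ have only one incident edge, so nothing further is required there. The main subtlety I anticipate is the cyclic seam: when $i+2$ exceeds $n$ and resets to $1$ or $2$, I must verify by direct substitution that the cycle-edge colours incident to $v_1$ and $v_n$ still differ from one another and from the two relevant spoke colours, and that the pendant sets at $v_n$ and $v_1$ remain nonempty. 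This is a routine modular-arithmetic check valid for $n \geq 4$, after which the explicit $n$-edge-colouring gives $\chi'(H_n) \leq n$ and, together with the lower bound, yields $\chi'(H_n) = n$.
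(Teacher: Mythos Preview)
Your argument is correct and follows the same overall strategy as the paper: obtain the lower bound from the maximum degree at the hub, then colour the spokes with $n$ distinct colours and extend to the cycle and pendant edges. Your version is in fact tighter than the paper's, which only argues loosely that for each cycle edge $l_i$ ``there exists'' a non-adjacent spoke whose colour can be reused, without checking that these choices can be made consistently on adjacent cycle edges; your explicit rule $v_iv_{i+1}\mapsto i+2\pmod n$ closes that gap. You are also right to restrict to $n\ge 4$: for $n=3$ the cycle vertices have degree $4>\nobreak n$, so $\chi'(H_3)\ge 4$ and the statement as written fails there, a point the paper does not address.
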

\begin{proof}
Let $u_1$ is the central vertex and $v_1,v_2,v_3,...,v_n$ be the vertices of the cycle.  Let $w_1,w_2,w_3,...,w_n$ be the pendent vertices attached to $v_1,v_2,v_3,...,v_n$ respectively.  Let $e_1,e_2,e_3,...,e_n$ be the edges incident on the vertex $u_1$.  Let $l_1,l_2,l_3,...,l_n$ be the edges of the cycle formed by the vertices $v_1,v_2,v_3,...,v_n$.  Let $q_1,q_2,q_3,...,q_n$ be the pendent edges.  Since each $e_1,e_2,e_3,...,e_n$ are adjacent to each other, to color the edges $e_1,e_2,e_3,...,e_n$, we need atleast $n$ colors.  For every edge $l_i$, we can find atleast one edge $e_j$ such that $l_i$and$e_j$ are non-adjacent.  Color the edge $l_i$ with the same color of $e_j$.  Using the same set of $n$ colors, we can color all the edges $e_1,e_2,e_3,...,e_n$ and $l_1,l_2,l_3,...,l_n$.  For any edge $q_k$, there will be atleast one edge $e_j$ and alteast one edge $l_i$ with the same color and are non-adjacent to $q_k$.  Now assign this color to $q_k$.  Hence we color all the vertices of helm graph using the same set of $n$ colors.
\end{proof}

\begin{proposition}
For a helm graph $H_n$ on $2n+1$ vertices, and $3n$ edges, $n\geq 3$,
\[\chi(H_n)+\chi(L(H_n))=
\left\{
\begin{array}{ll}
n+4 & \text{if $n$ is even} \\
n+3 & \text{if $n$ is odd}
\end{array}\right.\]
\[\chi(H_n).\chi(L(H_n))=
\left\{
\begin{array}{ll}
4n & \text{if $n$ is even} \\
3n & \text{if $n$ is odd}
\end{array}\right.\]
\end{proposition}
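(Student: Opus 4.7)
The plan is to decouple the two quantities that appear in the sum and the product. The line-graph factor is already in hand: since $\chi(L(H_n))=\chi'(H_n)$, the preceding theorem gives $\chi(L(H_n))=n$ with no further work. Everything therefore reduces to pinning down $\chi(H_n)$ in each parity class of $n$ and then reading off the two identities by simple arithmetic.

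To compute $\chi(H_n)$ I would argue directly from the structure, reusing the labelling introduced in the proof of the preceding theorem. The hub $u_1$ is joined to every cycle vertex $v_1,\dots,v_n$, so any proper coloring of $H_n$ restricted to the wheel $\{u_1,v_1,\dots,v_n\}$ must use one more color than a proper coloring of $C_n$; this gives the lower bound $\chi(H_n)\ge 1+\chi(C_n)$. For the matching upper bound, I would take any proper coloring of $C_n$, assign $u_1$ a fresh color, and then color each pendant $w_i$ with any color distinct from that of its unique neighbor $v_i$; since at least three colors are already in play (recall $n\ge 3$), such a choice is always available. Hence $\chi(H_n)=1+\chi(C_n)$, which equals $3$ or $4$ according to the parity of $n$.

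Once both pieces are in place, plugging $\chi(L(H_n))=n$ together with the two values of $\chi(H_n)$ into $\chi(H_n)+\chi(L(H_n))$ and $\chi(H_n)\cdot \chi(L(H_n))$ reproduces the four case statements of the proposition. The step most likely to cause trouble is the parity bookkeeping: because the subscript of $H_n$ here records the length of the underlying cycle $C_n$, whereas the wheel proposition indexes $W_n$ by the total vertex count (so that the cycle inside has length $n-1$), the ``even/odd'' convention from the wheel case cannot simply be transcribed. I would verify the final alignment against the small cases $H_3$ and $H_4$, where $\chi'$ can be exhibited by hand, before committing to the combined expressions.
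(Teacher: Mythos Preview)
Your overall plan is exactly the paper's: invoke the preceding theorem to get $\chi(L(H_n))=\chi'(H_n)=n$, assert the value of $\chi(H_n)$ in each parity class, and then add and multiply. The paper simply states $\chi(H_n)\in\{3,4\}$ without argument, whereas you supply the justification via $\chi(H_n)=1+\chi(C_n)$, which is a welcome improvement.

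There is, however, a genuine mismatch that your own hedging at the end almost catches. Your derivation gives $\chi(H_n)=1+\chi(C_n)$, and since $\chi(C_n)=2$ for even $n$ and $\chi(C_n)=3$ for odd $n$, you obtain $\chi(H_n)=3$ when $n$ is even and $\chi(H_n)=4$ when $n$ is odd. Plugging these in yields $\chi(H_n)+\chi(L(H_n))=n+3$ for $n$ even and $n+4$ for $n$ odd, and similarly $3n$ and $4n$ for the product. This is the \emph{opposite} parity assignment from the one printed in the proposition (and asserted in the paper's proof). The small-case check you propose makes this unmistakable: for $n=3$ the wheel inside $H_3$ is $K_1+C_3=K_4$, so $\chi(H_3)=4$ and the sum is $7=n+4$, not $n+3$; for $n=4$ a $3$-coloring of $K_1+C_4$ is immediate, so $\chi(H_4)=3$ and the sum is $7=n+3$, not $n+4$. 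Thus your sentence ``reproduces the four case statements of the proposition'' is not correct as written: your argument is sound, but it proves the proposition with the even and odd cases interchanged; the statement as printed (and the paper's own proof) has the parities swapped.
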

\begin{proof}
We know that 
\[\chi(H_n)=
\left\{
\begin{array}{ll}
4 & \text{if $n$ is even} \\
3 & \text{if $n$ is odd}
\end{array}\right.\] 
for all positive integer $n\geq4 $.  Let $L(H_n)$ denotes the line graph of $H_n$.  Then, 
$\chi(L(H_n))= \chi'(H_n) = n$
\\
Therefore
\[\chi(H_n)+\chi(L(H_n))=
\left\{
\begin{array}{ll}
n+4 & \text{if $n$ is even} \\
n+3 & \text{if $n$ is odd}
\end{array}\right.\]
Similarly,
\[\chi(H_n).\chi(L(H_n))=
\left\{
\begin{array}{ll}
4n & \text{if $n$ is even} \\
3n & \text{if $n$ is odd}
\end{array}\right.\]
\end{proof}
\begin{definition}\rm{
\cite{DBW} Given a vertex $x$ and a set $U$ of vertices, an $x$, $U-$fan is a set of paths from $x$ to $U$ such that any two of them share only the vertex $x$.}
\end{definition}
\begin{theorem}
The chromatic index of a fan graph $F_{1,n}$ with $n+1$ vertices is $n$. 
\end{theorem}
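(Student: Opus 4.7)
The plan is to combine Vizing's bound (Theorem 2.6 in the paper) with an explicit edge-coloring. Since the central vertex of $F_{1,n}=K_1+P_n$ has degree $n$, we have $\Delta(F_{1,n})=n$, so Vizing's theorem yields $\chi'(F_{1,n})\le n+1$. For the matching lower bound, observe that the $n$ spokes at the central vertex are pairwise adjacent (they all share that vertex), so they constitute a clique of size $n$ in the line graph and must receive $n$ pairwise distinct colors; hence $\chi'(F_{1,n})\ge n$. This reduces the problem to exhibiting a proper $n$-edge-coloring.

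For the construction, I would label the central vertex $u$ and the path vertices $v_1,v_2,\ldots,v_n$, write $e_i=uv_i$ for the spokes and $l_i=v_iv_{i+1}$ for the path edges, and assign color $i$ to $e_i$. Each path edge $l_i$ is then incident to exactly two spokes, namely $e_i$ and $e_{i+1}$, so only the two colors $i$ and $i+1$ are forbidden by the spokes at $l_i$; the remaining adjacency constraints involve only the neighboring path edges $l_{i-1}$ and $l_{i+1}$. Setting $c_i\equiv i+2\pmod{n}$ (using the colors $\{1,2,\ldots,n\}$) then avoids $i$, $i+1$, and produces consecutive values for consecutive path edges, so that all four constraints $c_i\notin\{i,\,i+1,\,c_{i-1},\,c_{i+1}\}$ are satisfied. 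This demonstrates an $n$-edge-coloring and completes the proof.

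The main obstacle is mild: it is the bookkeeping needed to confirm that the cyclic shift $c_i\equiv i+2\pmod{n}$ works at the boundary (at $i=n-1$, where the shift wraps around), and to observe that the argument requires $n\ge 3$ so that at least one color remains available after the two spokes at each path edge are excluded. This step is analogous to the cyclic-reuse argument already used in the paper's proofs for the wheel and helm graphs, and so should follow the same style of verification.
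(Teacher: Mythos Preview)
Your proof takes essentially the same approach as the paper's: both obtain the lower bound from the $n$ pairwise adjacent spokes at the central vertex and then argue that the $n-1$ path edges can reuse those same $n$ colors. Your version is more explicit and more careful---you give the concrete assignment $c_i\equiv i+2\pmod n$ and flag the needed hypothesis $n\ge 3$---whereas the paper argues only existentially that each path edge has some non-adjacent spoke whose color it may borrow.
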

\begin{proof}
The fan graph $F_{1,n}$ with $n+1$ vertices is $K_1+ P_{n-1}$.  Let $e_1,e_2,e_3,...,e_n$ be the edges incident with the vertex $K_1$ and we need $n$ colors to color this $n$ edges.  Let $q_1,q_2,q_3,...,q_{n-1}$ be the edges in the path $P_{n-1}$.  For any edge $q_j$ in the path $P_{n-1}$, there exists an edge $e_i$ which is not adjacent to $q_j$.  Therefore $e_i$ and $q_j$ can have the same color.  That is, by taking $(n-1)$ colors out of $n$ colors, we can color the edges $q_1,q_2,q_3,...,q_{n-1}$.  That is we can color the edges of a fan graph $F_{1,n}$ with $n$ colors or the chromatic index of $F_{1,n}$ is $n$.
\end{proof}
\begin{proposition}
For a fan graph $F_{1,n}$, 
\\
\centerline {$\chi(F_{1,n})+\chi(L(F_{1,n}))=n+4$   and}
\\
\centerline {$\chi(F_{1,n}).\chi(L(F_{1,n}))=3(n+1)$}
\end{proposition}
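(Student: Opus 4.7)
The plan is to compute $\chi(F_{1,n})$ and $\chi(L(F_{1,n}))$ separately and then combine them by arithmetic. For the vertex chromatic number, I would first note that $F_{1,n}=K_1+P_n$ contains a triangle (the apex together with any edge of the underlying path), so $\chi(F_{1,n})\geq 3$. A matching $3$-coloring is obtained by giving the apex its own color and properly $2$-coloring the bipartite path $P_n$ with the two remaining colors; since the apex is adjacent to every path vertex but uses a color disjoint from the path, the resulting assignment is proper. Hence $\chi(F_{1,n})=3$.

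For the edge side, I would invoke the identity $\chi(L(G))=\chi'(G)$ recorded in Definition~2.4, so that $\chi(L(F_{1,n}))=\chi'(F_{1,n})$, and then read off the chromatic index of the fan from the preceding theorem. Combining this value with $\chi(F_{1,n})=3$ reduces the proposition to direct arithmetic: one checks that $3+\chi'(F_{1,n})=n+4$ and $3\cdot\chi'(F_{1,n})=3(n+1)$, both of which require the edge-chromatic number to equal $n+1$.

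The main obstacle I anticipate is the edge-chromatic step. Vizing's bounds $\Delta\leq\chi'\leq\Delta+1$ with $\Delta=n$ at the apex leave only two candidates for $\chi'(F_{1,n})$, and the claimed sum and product single out the upper one, $n+1$. The proof must therefore carefully track how many colors are spent at the apex versus how many are forced onto the path, taking into account that the parity of the path may prevent the path edges from being recycled from the palette already used at the apex. Once that value is extracted consistently from the preceding theorem, the rest of the argument collapses to a single line combining $3$ with $n+1$ additively and multiplicatively to recover the two claimed formulas.
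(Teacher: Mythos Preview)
Your overall strategy---compute $\chi(F_{1,n})=3$, use $\chi(L(F_{1,n}))=\chi'(F_{1,n})$, and then add and multiply---is exactly what the paper does; in fact your justification that $\chi(F_{1,n})=3$ (triangle for the lower bound, apex colour plus a $2$-colouring of $P_n$ for the upper bound) is more careful than the paper's, which simply asserts the value.

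The discrepancy you flagged is real, but it is not a gap in \emph{your} argument: it is an inconsistency in the paper itself. The preceding theorem establishes $\chi'(F_{1,n})=n$, and the paper's own proof of the proposition then computes
\[
\chi(F_{1,n})+\chi(L(F_{1,n}))=3+n=n+3,\qquad \chi(F_{1,n})\cdot\chi(L(F_{1,n}))=3n,
\]
contradicting the $n+4$ and $3(n+1)$ printed in the statement. So you should not try to squeeze out an $(n+1)$st colour via parity considerations on the path; for $n\geq 3$ the fan genuinely has $\chi'=\Delta=n$ (your instinct that Vizing's upper value might be needed is only correct at $n=2$, where $F_{1,2}=K_3$). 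The right resolution is to regard the displayed formulas in the proposition as typographical errors for $n+3$ and $3n$, prove those, and move on.
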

\begin{proof}
For a fan graph $F_{1,n}$, with $e\geq1$, we have $\chi(F_{1,n})=3$ for all positive integer $n\geq2$.  Let $L(F_{1,n})$ denotes the line graph of $F_{1,n}$.  Then $\chi(L(F_{1,n}))=\chi'(F_{1,n})=n$.
\\
Therefore $\chi(F_{1,n})+\chi(L(F_{1,n}))=3+n=n+3$ and
\\
$\chi(F_{1,n}).\chi(L(F_{1,n}))=3n$.
\end{proof}
\section{Conclusion}
The theoretical and experimental results obtained in this research may provide a better insight into the problems involving chromatic number by improving the known lower and upper bounds on sums and products of chromatic numbers of a graph $G$ and an associated graph of $G$.  More properties and characteristics of operations on chromatic number and also other graph parameters are yet to be investigated.  The problems of establishing the inequalities on sums and products of chromatic numbers for various graphs and graph classes still remain unsettled. All these facts highlight a wide scope for further studies in this area.

\end{document}